\newcommand{\complex}{\mathbb{C}}
\newcommand{\paraa}[1]{\big(#1\big)}
\newcommand{\parab}[1]{\Big(#1\Big)}
\newtheorem{theorem}{Theorem}[section]
\newtheorem{lemma}[theorem]{Lemma}
\newtheorem{proposition}[theorem]{Proposition}
\theoremstyle{definition}
\newtheorem{definition}[theorem]{Definition}
\theoremstyle{remark}
\numberwithin{equation}{section}
\newcommand{\zb}{\overline{z}}
\newcommand{\xb}{\overline{x}}
\newcommand{\yb}{\overline{y}}
\newcommand{\wb}{\overline{w}}
\newcommand{\ib}{\overline{i}}
\newcommand{\jb}{\overline{j}}
\title{Weighted Bergman kernels and $\star$-products}
\author{Andreas Sykora}
\address[Andreas Sykora]{}
\email{syko@gelbes-sofa.de}
\thanks{}
\subjclass[2000]{}
\keywords{}
\begin{document}

\begin{abstract}
  We calculate the weighted Bergman kernel on
  a complex domain with a weight of the form $\rho=e^{-\alpha\phi}\mu g$,
  where $\alpha$ is a positive real number, $\phi$ is a Kähler potential, g is the determinant
  of the corresponding Kähler metric and $\mu$ is a real-valued positive function.
  Several $\star$-products related to the Bergman kernel are determined. Explicit formulas are
  provided up to first order.
\end{abstract}

\maketitle


\section{Introduction}

Let $\Omega$ be a domain in $\complex^N$ ($N\geq1)$) and
$\rho$ a positive smooth weight function on $\Omega$.
It is known that the \emph{weighted Bergman space} $L^2_\text{hol}(\Omega,\rho)$ of
all holomorphic functions in $L^2(\Omega,\rho)$ has a
reproducing kernel $K(x,\yb)$, i.e.
\begin{align*}
  f(x) = \int_\Omega  K(x,\yb) f(y) \, \rho(y,\yb) d^{2N} y
\end{align*}
for $f\in L^\infty(\Omega)$, which is called
\emph{weighted Bergman kernel}.
Abusing notation, we will often write $f(x)$, $f(\xb)$ or $f(x,\yb)$
and mean a function, which is holomorphic or anti-holomorphic in the
respective argument (and not its value at the points $x,y$).

The Segal-Bargmann space is the prototypic example for
a Bergman space, where $\Omega=\complex^N$, the 
weight function is $\rho(x,\xb)=e^{-\alpha x\xb}$ and the
Bergman kernel is $K(x,\yb)=\frac{\alpha^N}{\pi^N}e^{\alpha x\yb}$.
See, for example \cite{Bar20}, which is a review on weighted Bergman kernels used in
mathematical physics.
When $\Omega$ is the unit disc and
the weight function is $\rho(x,\xb)=\frac{\alpha+1}{\pi}(1-|x|^2)^\alpha$
where $\alpha>-1$ is a parameter, the 
Bergman kernel is $K(x,\yb)=(1-x\yb)^{-\alpha-2}$
Numerous explicit examples of Bergman kernels are known (\cite{Bon20,Boa97,Den21}).

Relaxing the condition of $\Omega$ being a domain,
one can consider Kähler manifolds $M$ with
a Hermitian line bundle $L\rightarrow M$
having a curvature that equals the symplectic form
$\omega$ of the Kähler manifold. Vice versa, a Kähler manifold
is said to be quantizable, when its symplectic form has integer cohomology and 
such an Hermitian line bundle exists. The Bergman
space is then the space of holomorphic sections $L^2_\text{hol}(M,L)$,
which is a subspace of $L^2(M,L)$, the space of square integrable sections with
respect to the Hermitian inner product \cite{Cha03, Del21, Hez21}.
In particular, one considers the sequence of Bergman spaces
$L^2_\text{hol}(M,L^n)$ of the powers 
of the line bundle in the limit $n\rightarrow\infty$.
In this context, the weight has locally the form $\rho=e^{-n\phi}g$,
where $\phi$ is a local Kähler potential and $g=\omega^N / N!$
is the determinant of the Kähler metric.

For $n\rightarrow \infty$ asymptotic formulas 
for the Bergman kernel $K_n(x,\xb)$ of the
the line bundle $L^n$ were firstly derived in
\cite{Tia90,Zel98,Cat99,Lu00},
and extended to the off-diagonal in \cite{Cha03, Del21, Hez21}.
The asymptotic expansion has the form
\begin{align} \label{kern1}
  K_n(x,\yb) = \frac{n^N}{\pi^N} e^{n\phi(x,\yb)} \sum_{m=0}^\infty \frac{1}{n^m} k_m(x,\yb)
\end{align}
where the functions $k_m(x,\yb)$ depend on the respective $\phi$.
In \cite{Dou10} and \cite{Kle09}, this asymptotic expansion 
is derived with the aid of a path integral and the density matrix
projected on the lowest Landau level, respectively.

Returning to the Kähler manifold being a domain
$\Omega$ in $\complex^N$, it is shown in \cite{Eng00-2} that for the weight $\rho=e^{-\alpha\phi}g$
with $\alpha$ a positive real number and $g$ the determinant of the Kähler metric for the potential $\phi$,
the Bergman kernel $K_\alpha(x,\yb)$ has
asymptotic expansion
\begin{align} \label{kern2} 
  K_\alpha(x,\yb) = \frac{\alpha^N}{\pi^N} e^{\alpha\phi(x,\yb)} \sum_{m=0}^\infty \frac{1}{\alpha^m} k_m(x,\yb)
\end{align}
In \cite{Eng00-1,Eng-??} it is shown that for the weight
$\rho=e^{-n\psi-\phi}$, where $\psi$ and $\phi$ are
plurisubharmonic functions and $k$ is a natural number, the Bergman kernel $K_n(x,\yb)$ has
asymptotic expansion
\begin{align} \label{kern3} 
  K_n(x,\yb) = \frac{n^N}{\pi^N} e^{n\phi(x,\yb)+\psi(x,\yb)} \sum_{m=0}^\infty \frac{1}{n^m} k_m(x,\yb)
\end{align} 
with functions $k_m(x,\yb)$ depending on $\phi$ and $\psi$.

For studying the Bergman space, in \cite{Cha03} Töplitz operators
generalizing the kernel (\ref{kern1}) have been introduced. These kernels are
called covariant Töplitz operator kernels in \cite{Del21}. For a possibly solely
formal sum
\begin{align} 
  f(x,\yb) = \sum_{m=0}^\infty \frac{1}{n^m} f_m(x,\yb)
\end{align} 
where the $f_m(x,\yb)$ are  functions on $M\times M$,
the covariant Töplitz operator kernel is
\begin{align} \label{cov_toep} 
  T_f(x,\yb) = \frac{n^N}{\pi^N} e^{n\phi(x,\yb)} \sum_{m=0}^\infty \frac{1}{n^m} f_m(x,\yb)
\end{align} 
Under further assumptions, it is shown in in \cite{Cha03} and \cite{Del21} that
the product of two covariant Töplitz operators is again a covariant Töplitz operator.
The Bergman kernel (\ref{kern1}) becomes a covariant Toeplitz operator
with symbol $k=\sum k_n$.

Starting point of the author for the present work 
was a physical problem, relating to the quantization
of a system on a domain $\Omega$ in $\complex^N$
with weight $\rho= e^{-\alpha\phi}$ (without the factor $g$), which
for $\alpha$ being a natural number is a special case of (\ref{kern3}).
In particular, when the term of first order of the asymptotic expansion
of the Bergman kernel is known, the semi-classical limit of the quantized system,
which is mainly encoded in its Poisson bracket, can be derived.
However, only for the asymptotic expansions (\ref{kern1}) and
(\ref{kern2}) explicit formulas were provided for the first few $k_m$.
These expressions can be derived, from recurrence relations for
the functions $k_m$, allowing to explicitly compote the function $k_m$ from
the functions $k_0,\dots,k_{m-1}$, see \cite{Eng00-2, Cha03, Hez21}, for example.

In the following, we will define covariant Töplitz operators for
more general weights. In particular, as standing assumptions,
$\Omega \subset \complex^N$ is a domain,
and we will consider weights of the form
\begin{align}  \label{rho_weight}
  \rho(x,\xb)= e^{-\alpha\phi(x,\xb)} \mu(x,\xb) g(x,\xb)
\end{align}
where $\phi(x,\xb)$ is a Kähler potential, i.e. a strictly plurisubharmonic function,
and $\mu(x,\xb)$ is a real-valued positive function. We further assume
that $\phi$ and $\mu$ are real-analytic and have a holomorphic extension
to a neighbourhood of the diagonal of $\Omega\times\Omega$.
$g$ is the determinant of the Kähler metric $g_{i\jb}=\partial_i\partial_{\jb}\phi$
associated with $\phi$. $\alpha$ is a positive real number.

Based on the weight (\ref{rho_weight}), it shows that it is beneficial
to define the kernel of the covariant Töplitz operator $T_f$
of the \emph{symbol} $f$ (see section 2) to be
\begin{align*} 
  T_{f}(x,\yb) = \frac{\alpha^N e^{\alpha\phi(x,\yb)}}{\pi^N \mu(x,\yb)} f(x,\yb)
\end{align*}
where we have included the function $\mu$ in the denominator.

In general, a "symbol" can be considered as a function with good properties,
such that the Töplitz operator $T_f$ exists.
Let us assume that $f_1$, $f_2$, $f_3$ are symbols and
that the product operator $T_{f_1}M_{f_2}T_{f_3}$ is also
a covariant Töplitz operator. $M_{f_2}$ denotes the multiplication operator with the
symbol $f_2$ from the left. This means that there
is a \emph{triple symbol} $S(f_1,f_2,f_3)$ with
\begin{align*} 
  T_{S(f_1,f_2,f_3)} = T_{f_1}M_{f_2}T_{f_3}
\end{align*} 
Let $f$ and $h$ be symbols, then the
\emph{Berezin-Töplitz-$\star$-product} of them 
is  
\begin{align} \label{ber_top_star}
  \paraa{f_1\star f_2}(x,\xb) = S\paraa{f_1,1,f_2}(x,\xb)
\end{align}  
 It then follows that for the corresponding Töplitz operators
\begin{align*} 
  T_{f} T_{h} = T_{S(,f, 1 , h)} = T_{f \star h}
\end{align*} 
Since the operator product is associative, the same applies
to the $\star$-product.
The \emph{symbol $k$ of the Bergman kernel} is the unit
of the Berezin-Töplitz-$\star$-product. When one assumes that it
exists, the corresponding Toeplitz operator $T_k$, i.e. the
\emph{Bergman kernel} is a projector
\begin{align*}  
  T_k T_f = T_{k\star f} = T_{f\star k} = T_f T_k = T_f
\end{align*}
The corresponding subspace onto which $T_k$ projects is
the Bergman space.

The $\star$-product (\ref{ber_top_star}) has the disadvantage that
instead of the function $1$, the symbol $k$ of the Bergman kernel
is its unit. For quantization, a $\star$-product with the function $1$ as unit is more
useful. To this end, Berezin \cite{Ber74} introduced the Berezin $\star$-product.
This is a $\star$-product equivalent to the Berezin-Töplitz-$\star$-product
(\ref{ber_top_star}) by the Berezin transform. 
In particular, the \emph{contravariant symbol} $\psi(f)$ of the symbol $f$ is  
defined to be
\begin{align}  \label{ber_trans}
  \psi(f) = S(k,f,k)
\end{align}  
where $k$ is the symbol of the Bergman kernel and $S$ is the triple
symbol from above.
$\psi$ is called \emph{Berezin transform}.
Due to $\psi(1)=S(k,1,k) = k\star k = k$, the symbol $k$
of the Bergman kernel is the contravariant symbol of $1$.
When one assumes that the contravariant symbol 
exists, the corresponding Toeplitz operator is 
\begin{align*}  
  T_{\psi(f)}=  T_{S(k, f, k)} = T_k M_f T_k
\end{align*}
This means that the contravariant symbol is the projection of
the multiplication operator on the Bergman space.
The Berezin $\star$-product or contravariant $\star$-product is defined by
\begin{align} \label{con_star} 
  f\star_{\text{con}} h = \psi^{-1} \paraa{ \psi(f) \star \psi (h) }
\end{align}  
and has the function $1$ as unit, since $\psi(1)=k$.

A further possibility to define a $\star$-product with $1$ as unit starts with
the covariant symbol of a symbol $f$, which is 
\begin{align*}  
  \varphi(f) = \frac{f}{k}
\end{align*}
where $k$ is the symbol of the Bergman kernel.
The covariant $\star$-product is defined by
\begin{align}  \label{cov_star} 
  f\star_{\text{cov}} h = \varphi \paraa{ \varphi^{-1}(f) \star \varphi^{-1}(h) }
   = \frac{1}{k}\paraa{(k f)\star(k g)}
\end{align}    
which also has the function $1$ as unit.

All the relations above can be derived, when one knows the
triple symbol $S(f_1,f_2,f_3)$. In the following, we will define formal
symbols and a formal triple symbol, which are power series
in a formal parameter $\hbar$, which can be identified
with $\frac{1}{\alpha}$. With the formal triple symbol,
formal $\star$-products and a formal Bergman kernel can
be defined. In particular in section 2, we will use
analytic properties of the triple symbol to show that it fulfils a 
generalized associativity law and in section 3, we will use the
generalized associativity law to show the following theorem.

\begin{theorem} \label{main_theo}
  Let $\Omega\subset \complex^N$ be a domain and $\rho$ a
  weight of the form (\ref{rho_weight}). Let $R_n$ be the differential
  operators defined by (\ref{int_exp}) occurring in the asymptotic
  expansion of the integral (\ref{J_int}).
  Furthermore, let $f$ and $h$ be formal symbols (see Definition \ref{sym_def}).
  Then the formal Berezin-Töplitz-$\star$-product is
  \begin{align} \label{ber_top_exp}
   \paraa{f\star h}(x,\xb)=  \sum_{n=0}^{\infty} \hbar^n
     R_n\paraa{ f^{x\yb} h^{y\xb} \tilde{\mu}^{x\xb y\yb} }(x,\xb) 
  \end{align}
  ($\tilde{\mu}$ is a function depending on $\mu$ and is defined in (\ref{dia_fun_mu}).
  (\ref{ber_top_exp}) is an associative product having the formal
  symbol $k$ of the Bergman kernel as unit.
  
  Up to first order, the formal Berezin-Töplitz $\star$-product is
  \begin{align} \label{ber_top_star_h2}
     f\star h & =f h  + \hbar \paraa { g^{i\jb} f_{,\jb} h_{,i}  + f h (\Delta \mu + \frac{1}{2} R) }  +  \mathcal{O}(\hbar^2)    
  \end{align}
   where $g^{i\jb}$ is the inverse of the Kähler metic induced by $\phi$.
  The formal symbol $k$ of the Bergman kernel is
  \begin{align} \label{ker_sym_h2}
     k = 1 - \hbar\paraa{ \Delta \mu + \frac{1}{2} R } + \mathcal{O}(\hbar^2)  
  \end{align}
  The contravariant $\star$-product is
  \begin{align} \label{con_star_h2}
   f\star_{\text{con}} h  = fh - \hbar g^{i\jb} f_{,i} h_{,\jb} +  \mathcal{O}(\hbar^2)    
  \end{align}
  The covariant $\star$-product is
  \begin{align} \label{cov_star_h2}
    f\star_{\text{cov}} h  = fh + \hbar \paraa { g^{i\jb} f_{,\jb} h_{,i}  }  +  \mathcal{O}(\hbar^2)    
  \end{align}
  For two functions $f,h$, the Poisson structure of the classical limit is
  \begin{align} 
    \{f, h\}  =  g^{i\jb} \paraa { f_{,\jb} h_{,i}  - f_{,i} h_{,\jb}  }
  \end{align}
\end{theorem}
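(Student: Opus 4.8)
The plan is to compute everything from the triple symbol $S(f_1,f_2,f_3)$, whose existence and generalized associativity law are assumed from section 2, together with the asymptotic expansion (\ref{int_exp})–(\ref{J_int}) of the relevant integral $J$. The first and main step is to establish the master formula (\ref{ber_top_exp}). By definition $f\star h = S(f,1,h)$, so I would write the Töplitz-operator product $T_f T_h$ as an integral over $\Omega$ against the reproducing kernels, insert the explicit form $T_f(x,\yb)=\alpha^N e^{\alpha\phi(x,\yb)}/(\pi^N\mu(x,\yb)) f(x,\yb)$, and collect the exponential factors. The combination $\phi(x,\yb)+\phi(y,\xb)-\phi(x,\xb)-\phi(y,\yb)$ is, by strict plurisubharmonicity, a function with a nondegenerate critical point along the diagonal $y=x$, and the Kähler-metric determinant $g$ supplies exactly the right Jacobian factor. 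Thus the $y$-integral is a Laplace-type integral of the form (\ref{J_int}), and its asymptotic expansion in $\hbar=1/\alpha$ is governed by the differential operators $R_n$ of (\ref{int_exp}) acting on the "off-diagonal" product $f^{x\yb}h^{y\xb}\tilde\mu^{x\xb y\yb}$; here $\tilde\mu$ is the diagonal-type function assembled from $\mu$ in (\ref{dia_fun_mu}), accounting for the three $\mu$-factors (two from the kernels $T_f,T_h$, one from the weight $\rho$). This yields (\ref{ber_top_exp}). Associativity is inherited from the operator product, or equivalently from the generalized associativity law for $S$, and the unit property $k\star f = f\star k = f$ follows since $T_k$ is the Bergman projector, which I would record as a formal consequence once $k$ is identified order by order.

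The second step is the first-order expansion. Here I would feed the known leading terms of the $R_n$-expansion — $R_0 = \mathrm{id}$ (restriction to the diagonal) and $R_1$ a second-order differential operator built from the inverse metric $g^{i\jb}$ together with curvature/Laplacian corrections coming from the subleading terms of the Gaussian integral and from $\log\mu$ — into (\ref{ber_top_exp}). Restricting to the diagonal kills all purely holomorphic or purely antiholomorphic derivatives acting on a single factor, so at order $\hbar$ the surviving contributions are: the mixed term $g^{i\jb} f_{,\jb} h_{,i}$ (one antiholomorphic derivative hitting $f^{x\yb}$, one holomorphic derivative hitting $h^{y\xb}$, with $g^{i\jb}$ the inverse Hessian of the phase), plus $fh$ times a scalar $c$ coming from $R_1$ acting on $\tilde\mu$ and on the Gaussian measure; matching against the known scalar curvature term in the classical Tian–Yau–Zelditch expansion (\ref{kern2}) and tracking the extra $\Delta\mu$ from the logarithmic derivative of $\mu$ gives $c = \Delta\mu + \tfrac12 R$. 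This is (\ref{ber_top_star_h2}). Setting $f=h=1$ and using that $k\star 1 = 1$ forces $k = 1 - \hbar(\Delta\mu+\tfrac12 R)+\mathcal O(\hbar^2)$, which is (\ref{ker_sym_h2}).

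The third step derives the covariant and contravariant products from (\ref{cov_star}) and (\ref{con_star}). For the covariant one I would just compute $\tfrac1k\big((kf)\star(kh)\big)$ to order $\hbar$: the scalar $-\hbar(\Delta\mu+\tfrac12 R)$ pieces from the three $k$-factors cancel exactly, and the derivatives of $k$ on the $f,h$ factors contribute nothing at this order because $k = 1+\mathcal O(\hbar)$ so $k_{,i}=\mathcal O(\hbar)$ and multiplies another $\hbar$, leaving only $fh + \hbar\, g^{i\jb} f_{,\jb} h_{,i}$, i.e. (\ref{cov_star_h2}). For the contravariant one, I need the Berezin transform $\psi(f)=S(k,f,k)$, whose first-order form I would extract from the same $R_n$-expansion applied to $k^{x\yb} f^{yy} k^{y\xb}\tilde\mu^{\cdots}$; the upshot is $\psi(f) = f - \hbar(\Delta f + \cdots) + \mathcal O(\hbar^2)$ with $\Delta = g^{i\jb}\partial_i\partial_{\jb}$ (the Berezin transform is $\mathrm{id}+\hbar\Delta+\dots$ up to sign conventions and $\mu$-dependent terms that cancel in the conjugated product), and then $\psi^{-1}(\psi(f)\star\psi(h))$ reorganizes the mixed derivative term with the opposite index placement, giving $fh - \hbar\, g^{i\jb} f_{,i} h_{,\jb}$, which is (\ref{con_star_h2}). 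Finally, for ordinary functions $f,h$ the classical limit is $\{f,h\} = \lim \tfrac1\hbar (f\star h - h\star f)$ (up to the usual factor of $i$ absorbed in conventions), and antisymmetrizing (\ref{ber_top_star_h2}) gives $g^{i\jb}(f_{,\jb}h_{,i} - f_{,i}h_{,\jb})$.

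The main obstacle is the first-order identification in step two: pinning down the scalar coefficient multiplying $fh$ at order $\hbar$, i.e. showing it is exactly $\Delta\mu + \tfrac12 R$. This requires carefully bookkeeping three separate sources — the Gaussian curvature correction to the Laplace integral, the Hessian of $\log g$ (which produces the scalar curvature $R$ via the standard Kähler identity $R = -g^{i\jb}\partial_i\partial_{\jb}\log g$ up to sign), and the Hessian of $\log\mu$ (producing $\Delta\mu$) — and confirming that the cross terms organize into precisely this combination. I would either do this by a direct stationary-phase computation of $R_1$ from (\ref{int_exp}), or, more cheaply, specialize to $\mu\equiv 1$ to recover the classical result (\ref{kern2}) of \cite{Eng00-2} as a consistency check and then reinstate $\mu$ by noting it enters only through $\tilde\mu$, whose $\log$ contributes additively. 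The remaining steps are then routine algebra in $\hbar$.
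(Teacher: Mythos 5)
Your proposal is correct and follows essentially the same route as the paper: express $f\star h=S(f,1,h)$ via the Laplace-type integral (\ref{J_int}) and its expansion (\ref{int_exp}), read off the first order from the explicit $R_0$ and $R_1$, obtain $k$ from the recurrence implied by $k\star 1=1$, and derive the contravariant and covariant products by conjugating with the Berezin transform $\psi$ and with $k$, respectively. The only substantive difference is that where you propose to pin down the scalar coefficient $\Delta\mu+\tfrac12 R$ by matching against the $\mu\equiv1$ expansion (\ref{kern2}) or by a fresh stationary-phase computation, the paper simply imports Englis's explicit formula $R_1(f)=\Delta f+\tfrac12 Rf$ and evaluates it on $f^{x\yb}h^{y\xb}\tilde\mu^{x\xb y\yb}$, which is the cleaner and fully rigorous version of the same step.
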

The Poisson structure of the classical limit is the first order of the commutator
of the $\star$-products defined above. It can be shown that equivalent
$\star$-products have the same Poisson structure.
 
Note that $\mu$ is not present up to first order in the 
contravariant $\star$-product and the  covariant $\star$-product,
and therefore in the Poisson structure of the classical limit. This means
that the quantization of the Kähler manifold $\Omega$ with the weight
$\rho=e^{-\alpha\phi}\mu g$ is independent of the function $\mu$.

\section{Töplitz operators}

In this section, we define formal symbols, which are formal
power series of functions and analytic symbols, which are functions
having an asymptotic expansion in $\frac{1}{\alpha}$. The parameter $\hbar$ 
of the formal power series can be identified with $\frac{1}{\alpha}$, when the
corresponding power series converges. 

\begin{definition}  \label{sym_def}
  The formal power series $f\in C^\omega(\Omega)[[\hbar]]$
   \begin{align}
      f(x,\xb) = \sum_{m=0}^{\infty} \hbar^m f_m(x,\xb)
   \end{align}
   is a \emph{formal symbol}, when there
   is a neighbourhood of the diagonal in $\Omega\times\Omega$,
   where the real-analytic functions $f_m(x,\xb)$ have a holomorphic extension $f_m(x,\yb)$.
   The corresponding \emph{analytic symbol of order $M$} is
   the function
   \begin{align}
      f_\alpha^{(M)}(x,\xb) = \sum_{m=0}^{M} \frac{1}{\alpha^m} f_m(x,\xb)
   \end{align}
   When $f_\alpha= \lim_{M\rightarrow\infty} f_\alpha^{(M)}$ exists in a neighbourhood of
   the diagonal, it is called \emph{analytic symbol of infinite order} or simply \emph{analytic symbol}.
   A \emph{principal symbol} is an analytic symbol of order $0$, i.e. it is a function $f$ on
   $\Omega$, which has a holomorphic extension to a neighbourhood
   of the diagonal of $\Omega\times\Omega$.
\end{definition} 
Each analytic symbol of order $M$ has a holomorphic extension in a neighbourhood
of the diagonal. Infinite analytic symbols with an additional convergence property
are defined in \cite{Del21,Cha21}.

\begin{definition}  
The kernel of the \emph{covariant Töplitz operator} of an analytic symbol $f_\alpha$
associated with the weight (\ref{rho_weight}) is
\begin{align}  \label{cov_tmu}
  T_{f}(x,\yb) = \frac{\alpha^N e^{\alpha\phi(x,\yb)}}{\pi^N \mu(x,\yb)} f_\alpha(x,\yb)
\end{align}
\end{definition}  
This is in analogy with (\ref{cov_toep}) and extends to general $\alpha$.
In view of (\ref{kern3}) we have included the function
$\mu$ in the prefactor, which will simplify formulas in the following.

We now consider three principal symbols $f_1$, $f_2$, $f_3$.
Let $M_{f}$ be the multiplication operator for left multiplication with
the principal symbol $f$. The triple product operator defined by
\begin{align}  \label{trip_prod}
  T_{f_1,f_2,f_3} = T_{f_1} M_{f_2} T_{f_3}
\end{align}
has kernel
\begin{align} \label{tri_ker}
    T_{f_1,f_2,f_3}(x,\zb) & = \int T_{f_1}(x,\yb) f_2(y,\yb) T_{f_3}(y,\zb) \rho(y,\yb) \, d^{2N}y \\
    & = \frac{\alpha^{2N} e^{\alpha\phi(x,\zb)}}{\pi^ {2N} \mu(x,\zb)}
               \int  e^{-\alpha\tilde{\phi}(x,\zb,y,\yb) } \tilde{\mu}(x,\zb,y,\yb)  
                  f_1 (x,\yb) f_2 (y,\yb) f_3(y,\zb) \; g(y,\yb) \, d^{2N}y
\end{align}
where
\begin{align}  
  \tilde{\phi}(x,\zb,y,\yb) & = \phi(x,\zb) +\phi(y,\yb) - \phi(x,\yb) - \phi(y,\zb) \label{dia_fun_phi}  \\
  \tilde{\mu}(x,\zb,y,\yb) & = \frac{\mu(x,\zb)\mu(y,\yb)}{\mu(x,\yb)\mu(y,\zb)} \label{dia_fun_mu}
\end{align}
are generalizations of Calabi's diastasis function.

The integral in (\ref{tri_ker}) is of the form
\begin{align}  \label{J_int}
  J_\alpha\paraa{\tilde{\phi}, f}(x,\zb) = \int  e^{-\alpha \tilde{\phi}(x,\zb, y,\yb) } f(x,\zb,y,\yb) g(y,\yb) \, d^{2N}y 
\end{align}
where $f(x,\zb,y,\yb)$ is a real-analytic function,
which exists in a neighbourhood of the diagonal in $\Omega\times\Omega\times\Omega$.
In \cite{Eng00-2} (Theorem 3), \cite{Cha03}, \cite{Del21} (Propositions, 3.12 and 4.3) it is shown that
the integral (\ref{J_int}) has an asymptotic expansion in $\alpha$
in a neighbourhood of the diagonal $\Omega\times\Omega$.
In \cite{Eng00-2} this asymptotic expansion is explicitly determined 
\begin{align} \label{int_exp}
   J_\alpha\paraa{\tilde{\phi}, f}(x, \zb) = \frac{\alpha^N}{\pi^N} \sum_{n=0}^{\infty} \frac{1}{\alpha^n} R_n\paraa{f}(x,\zb)
\end{align}
where $R_n$ are differential operators of order $2n$.
In general, the function $f$ depends on $x$, $\zb$, $y$ and $\yb$, and 
the operators $R_n$ contain at least $2n$ partial derivatives $\partial_i=\frac{\partial}{\partial y^i}$ and
$\partial_{\ib}=\frac{\partial}{\partial \yb^i}$ with respect to $y$ and $\yb$. After application of 
these partial derivatives, the result is evaluated at $x=y$ and $\zb=\yb$.
Below we will give explicit formulas for $R_0$ and $R_1$, see (\ref{R0_eq}) and (\ref{R1_eq}).

Applying the asymptotic expansion (\ref{int_exp})
to the kernel (\ref{tri_ker}) results in
\begin{align}  \label{trip_exp}
   T_{f_1,f_2,f_3}(x,\zb)
     = \frac{\alpha^{N} e^{\alpha\phi(x,\zb)}}{\pi^{N} \mu(x,\zb)}
		\sum_{n=0}^{\infty}
		  \frac{1}{\alpha^{n}} R_n\paraa{  f_1^{x\yb} f_2^{y\yb}f_3^{y\zb} \tilde{\mu}^{x\zb y\yb} } (x,\zb) 
\end{align}
where we have abbreviated the arguments of the functions inside the $R_n$
as upper indices to shorten notation. ( \ref{trip_exp}) is a covariant
Töplitz operator, with an analytic symbol of infinite order.

\begin{proposition}
    Let $f_1$, $f_2$ and $f_3$ be principal symbols.
    Then  the triple operator (\ref{trip_prod}) is a Töplitz operator
    wih analytic symbol
   \begin{align} \label{tri_sym}
       S_\alpha\paraa{f_1,f_2,f_3} (x,\zb) 
        = \sum_{n=0}^{\infty} \frac{1}{\alpha^n} R_n\paraa{ f_1^{x\yb} f_2^{y\yb}f_3^{y\zb} \tilde{\mu}^{x\zb y\yb} } (x,\zb) 
  \end{align}
\end{proposition}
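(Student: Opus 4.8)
The plan is to recover the Proposition by recapitulating the computation that produced (\ref{trip_exp}) and then reading off the symbol by comparison with the definition (\ref{cov_tmu}) of the covariant Töplitz operator; there is essentially a single analytic input, the asymptotic expansion (\ref{int_exp}), and everything else is bookkeeping. First I would write the integral kernel of the composition $T_{f_1}M_{f_2}T_{f_3}$: the kernel of a composition $AB$ picks up one factor of the weight $\rho$, and $M_{f_2}$ contributes the pointwise factor $f_2(y,\yb)$, which gives the first line of (\ref{tri_ker}). Substituting the explicit kernels (\ref{cov_tmu}) for $T_{f_1}$ and $T_{f_3}$ and the weight (\ref{rho_weight}) for $\rho$, the exponential factors combine to $e^{\alpha\phi(x,\yb)+\alpha\phi(y,\zb)-\alpha\phi(y,\yb)}$; factoring out $e^{\alpha\phi(x,\zb)}$ leaves exactly $e^{-\alpha\tilde\phi(x,\zb,y,\yb)}$ with $\tilde\phi$ as in (\ref{dia_fun_phi}), while the $\mu$-factors combine to $\mu(y,\yb)/(\mu(x,\yb)\mu(y,\zb))$, which after pulling out $1/\mu(x,\zb)$ is $\tilde\mu(x,\zb,y,\yb)$ as in (\ref{dia_fun_mu}). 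Together with the prefactor $\alpha^{2N}/\pi^{2N}$ coming from the two Töplitz kernels, this reproduces the second line of (\ref{tri_ker}).

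Next I would identify the remaining integral as a $J_\alpha$-integral of the form (\ref{J_int}) with phase $\tilde\phi$ and amplitude $f(x,\zb,y,\yb)=\tilde\mu^{x\zb y\yb}f_1^{x\yb}f_2^{y\yb}f_3^{y\zb}$, and check that the hypotheses under which the expansion (\ref{int_exp}) holds are met. Two points need verification. (i) $\tilde\phi$ is of the admissible (generalized Calabi diastasis) type: $\tilde\phi$ and its first $y$- and $\yb$-derivatives vanish at $y=x$, $\yb=\zb$, and, regarded as a function of the independent complex variables $(y,\yb)$ near $(x,\zb)$, it has a nondegenerate critical point there with Hessian the Kähler metric $g_{i\jb}$, which is immediate from $\tilde\phi|_{y=x,\yb=\zb}=0$ and $\partial_y\partial_{\yb}\tilde\phi=\partial_y\partial_{\yb}\phi$; for $x=z$ it reduces to Calabi's diastasis, which is nonnegative with a nondegenerate minimum, and the off-diagonal case is exactly what the cited references treat. (ii) The amplitude $f$ is real-analytic with a holomorphic extension to a neighbourhood of the diagonal of $\Omega\times\Omega\times\Omega$; this follows from the standing assumption that $\phi$ and $\mu$ — hence $\tilde\mu$ — are real-analytic with holomorphic extensions near the diagonal, together with the hypothesis that $f_1,f_2,f_3$ are principal symbols. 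Granting these, the results of \cite{Eng00-2} (Theorem 3), \cite{Cha03} and \cite{Del21} (Propositions 3.12 and 4.3) give the expansion (\ref{int_exp}) with the differential operators $R_n$ of order $2n$, uniformly for $(x,\zb)$ in a neighbourhood of the diagonal.

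Substituting (\ref{int_exp}) into the expression obtained in the first step and collecting powers of $\alpha$ yields precisely (\ref{trip_exp}). Since the prefactor $\alpha^{N}e^{\alpha\phi(x,\zb)}/(\pi^{N}\mu(x,\zb))$ occurring there is exactly the prefactor in the definition (\ref{cov_tmu}) of a covariant Töplitz operator, $T_{f_1,f_2,f_3}$ is the covariant Töplitz operator of the symbol $S_\alpha(f_1,f_2,f_3)=\sum_{n}\alpha^{-n}R_n(f_1^{x\yb}f_2^{y\yb}f_3^{y\zb}\tilde\mu^{x\zb y\yb})$ claimed in (\ref{tri_sym}). It remains to observe that this series defines an analytic symbol in the sense of Definition \ref{sym_def}: each $R_n(\cdots)(x,\zb)$ is obtained by applying a differential operator in $y,\yb$ to a function holomorphic in $x$ and anti-holomorphic in $z$ on a neighbourhood of the diagonal and then setting $y=x$, $\yb=\zb$, so every coefficient is real-analytic in $(x,\xb)$ with a holomorphic extension to a neighbourhood of the diagonal, and hence $S_\alpha(f_1,f_2,f_3)$ is an analytic symbol of infinite order wherever the partial sums converge.

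The main obstacle is the one non-bookkeeping ingredient, the validity of the Laplace/stationary-phase expansion (\ref{int_exp}) for $J_\alpha$ in the \emph{off-diagonal} regime: that it is governed by differential operators $R_n$ applied to the amplitude and evaluated after the substitution $y=x$, $\yb=\zb$, with remainders uniform in the parameters near the diagonal. This is precisely what the cited references establish once $\tilde\phi$ and the amplitude are seen to satisfy their hypotheses, so in the proof this step is an invocation rather than a new computation; the actual content of the Proposition is the algebraic reorganization that puts the kernel of $T_{f_1}M_{f_2}T_{f_3}$ into the normal form (\ref{cov_tmu}). One should keep in mind that the identity $T_{f_1,f_2,f_3}=T_{S_\alpha(f_1,f_2,f_3)}$ is read as an equality of integral kernels on a neighbourhood of the diagonal (equivalently of analytic symbols); promoting it to an operator identity on the Bergman space needs the separate boundedness and domain assumptions under which $T_{f_1}M_{f_2}T_{f_3}$ and $T_{S_\alpha(f_1,f_2,f_3)}$ are well defined, but that is not part of the statement.
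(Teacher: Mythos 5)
Your proposal is correct and follows essentially the same route as the paper, whose ``proof'' of this proposition is precisely the computation displayed just before it: write the kernel of $T_{f_1}M_{f_2}T_{f_3}$ as (\ref{tri_ker}), recognize the integral as $J_\alpha(\tilde\phi,\tilde\mu f_1f_2f_3)$ of the form (\ref{J_int}), invoke the cited asymptotic expansion (\ref{int_exp}) to get (\ref{trip_exp}), and read off the symbol by comparison with (\ref{cov_tmu}). Your additional verification that $\tilde\phi$ and the amplitude satisfy the hypotheses of the stationary-phase results, and that the coefficients $R_n(\cdots)(x,\zb)$ extend holomorphically near the diagonal, is a welcome tightening of steps the paper leaves implicit.
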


The triple symbol (\ref{tri_sym}) fulfils a generalized associativity law,
which we will later use to define an associative product for formal symbols.
To shows this, we need 
\begin{lemma} \label{cphi_lem}
   The function
   \begin{align*} 
   \check{\phi}(x,\zb, y,\yb,w,\wb)
      & = \tilde{\phi}(x, \zb,  y, \yb) + \tilde{\phi}(y, \zb,  w, \wb)
      = \tilde{\phi}(x, \zb,  w, \wb) + \tilde{\phi}(x, \wb,  y, \yb) \\
      & = \phi(x, \zb) + \phi(y, \yb) + \phi(w, \wb) - \phi(x,\yb)- \phi(y,\wb)-\phi(w,\zb)
  \end{align*}  
  is an analytic phase, such as defined in \cite{Del21}, 3.11. Thus, for every
  real-analytic function $f$, the integral
  \begin{align} \label{cJ_int}
    J_\alpha\paraa{\check{\phi}, f}(x,\zb) =  \int_{\Omega\time\Omega}
         e^{-\alpha \check{\phi}(x,\zb, y,\yb,w\wb) } f(x,\zb,y,\yb,w,\wb) g(y,\yb) \, d^{2N}y  \, d^{2N}w 
  \end{align}  
  exists for $(x,\zb)$ in a neighbourhood of the diagonal of $\Omega\times\Omega$
  and is a real-analytic function. which has an asymptotic expansion in $\frac{1}{\alpha}$.
\end{lemma}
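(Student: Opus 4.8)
The plan is in two stages: first settle the stated identities for $\check\phi$ by a direct computation, and then verify that $\check\phi$ is an analytic phase in the sense of \cite{Del21}, 3.11, in the integration variables $(y,\yb,w,\wb)$ with $(x,\zb)$ as the near-diagonal parameter; once that is done, the asymptotic expansion follows from the same Laplace-type theorem already invoked for \eqref{J_int}. For the identities one simply expands the definition \eqref{dia_fun_phi}: in $\tilde\phi(x,\zb,y,\yb)+\tilde\phi(y,\zb,w,\wb)$ the two terms $\pm\phi(y,\zb)$ cancel, in $\tilde\phi(x,\zb,w,\wb)+\tilde\phi(x,\wb,y,\yb)$ the two terms $\pm\phi(x,\wb)$ cancel, and both reduce to $\phi(x,\zb)+\phi(y,\yb)+\phi(w,\wb)-\phi(x,\yb)-\phi(y,\wb)-\phi(w,\zb)$. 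This also shows at once that $\check\phi$ extends holomorphically to a neighbourhood of the diagonal of $\Omega\times\Omega$, since by the standing hypotheses $\phi$ does.

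For the analytic-phase property I would check three things. (i) \emph{Critical point.} Annihilating the partial gradients of $\check\phi$ in $y,\yb,w,\wb$ forces, using strict plurisubharmonicity of $\phi$ near the diagonal, first $\yb=\wb$, then $y=x$, $w=y$ and $\wb=\zb$; hence the only critical point near the diagonal is $y=w=x$, $\yb=\wb=\zb$, and by the implicit function theorem it persists and stays unique for $(x,\zb)$ in a sufficiently small neighbourhood of the diagonal. (ii) \emph{Nondegeneracy.} At that point the holomorphic–holomorphic and antiholomorphic–antiholomorphic blocks of the Hessian vanish, each entry being zero or a difference of two equal second derivatives of $\phi$, while the mixed block, in the ordering $(y,w)\times(\yb,\wb)$, is the block–triangular matrix $\begin{pmatrix} g_{i\jb} & -g_{i\jb} \\ 0 & g_{i\jb} \end{pmatrix}$ of determinant $g^{2}\neq0$ (here $g=\det(g_{i\jb})>0$ because $g_{i\jb}=\partial_i\partial_{\jb}\phi$ is positive definite); consequently the $4N\times4N$ complex Hessian of $\check\phi$ is nondegenerate, which is the transversal nondegeneracy demanded of an analytic phase.

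(iii) \emph{Sub-level condition.} Using the decomposition $\check\phi=\tilde\phi(x,\zb,y,\yb)+\tilde\phi(y,\zb,w,\wb)$, each summand is an analytic phase in its own last pair of variables — this is exactly the property of $\tilde\phi$ underlying the integral \eqref{J_int}, established in \cite{Eng00-2,Cha03,Del21} — so each satisfies the lower bound on $\operatorname{Re}$ required there, quadratic near its critical set and bounded below by a positive constant outside a fixed neighbourhood of it. I would combine these two bounds with (ii), which shows that the quadratic part of $\check\phi$ already controls the full displacement $(\delta y,\delta w)$ and not merely a diagonal in it, to obtain the sub-level estimate for $\check\phi$, after possibly shrinking both the neighbourhood of the diagonal of $\Omega\times\Omega$ and the size of the $(y,w)$-region over which the preceding analysis is valid. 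I expect this step to be the main obstacle: the critical sets of the two diastasis summands are not aligned, and one must argue that their sum nonetheless dominates the entire displacement; the identities of the first paragraph and the Hessian computation in (ii) are short, and the rest reduces to the framework of \cite{Del21}.

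Once $\check\phi$ is recognised as an analytic phase, the conclusion is immediate from the machinery that yields \eqref{int_exp}: the asymptotic-expansion theorem for Laplace-type integrals with analytic phase (\cite{Del21}, Propositions 3.12 and 4.3, and \cite{Eng00-2}, Theorem 3) gives that \eqref{cJ_int} converges for $(x,\zb)$ in a neighbourhood of the diagonal, is real-analytic there, and admits an asymptotic expansion in powers of $\frac{1}{\alpha}$ whose coefficients are universal differential operators applied to $f$ and evaluated on the diagonal. As a cross-check one may instead integrate over $w$ first and then over $y$, applying \eqref{int_exp} twice (absorbing the absent factor $g(w,\wb)$ into the symbol), but that route requires separately tracking the uniformity in the intermediate parameter $y$, precisely the bookkeeping the analytic-phase formalism disposes of in a single step.
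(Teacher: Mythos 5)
Your proposal follows the same route as the paper's own proof --- verify that $\check{\phi}$ satisfies the analytic-phase conditions of \cite{Del21}, 3.11 (vanishing at the critical point $y=w=x$, $\yb=\wb=\zb$, vanishing gradient, nondegenerate Hessian) and then invoke the complex stationary phase lemma, Proposition 3.12 of \cite{Del21} --- and is in fact more detailed than the paper, which merely asserts these conditions without computation. The sub-level estimate you flag as the main obstacle is not one: on the real locus $\zb=\xb$ one has $\operatorname{Re}\check{\phi}=\tfrac12\paraa{D(x,y)+D(y,w)+D(w,x)}$, where $D$ denotes Calabi's diastasis, so $\operatorname{Re}\check{\phi}\ge 0$ near the diagonal with equality exactly at $y=w=x$, which gives the required lower bound directly rather than through the two misaligned summands.
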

\begin{proof}
  Since $\phi$ has a holomorphic extension on a neighbourhood
  of the diagonal, the same applies to $\check{\phi}$ and the function
    \begin{align*} 
     \Phi_\lambda (y_1,\yb_2, w_1, \wb_2) = \check{\phi}(x,\zb,y_1,\yb_2, w_1,\wb_2)
   \end{align*}
   with $\lambda=(x,\zb)$ is such as in \cite{Del21}, 3.11. In particular
   with $X_\lambda=(x, \zb, x,\zb)$
   \begin{align*}
      X_{\lambda=0} & = 0, \qquad \Phi_\lambda (X_\lambda) = 0 \\
      \partial_I \Phi_\lambda (X_\lambda) & = 0, \qquad
      \partial_{IJ} \Phi_\lambda (X_\lambda) \text{ is positive definite}
   \end{align*}
   where $\partial_I = \partial_{y_1}, \partial_{\yb_2}, \partial_{w_1},\partial_{\wb_2}$.
   The existence of the integral (\ref{cJ_int}) follows from the complex stationary
   phase lemma, see \cite{Del21}, Proposition 3.12.
\end{proof}

\begin{proposition} \label{5_sym_prop}
    Let $f_1$, $f_2$, $f_3$, $f_4$ and $f_5$ be principal symbols.
    Then the operator
     $\hat{T} = T_{f_1}M_{f_2} T_{f_3} M_{f_4} T_{f_5}$
    is a Töplitz operator with analytic symbol
   \begin{align} \label{5_sym}
       S_\alpha\paraa{f_1,f_2,f_3, f_4, f_5} 
        = S_\alpha\paraa{ f_1,f_2, S_\alpha(f_3, f_4, f_5) } 
        = S_\alpha\paraa{ S_\alpha(f_1,f_2,f_3), f_4, f_5}
  \end{align}
  where $S_\alpha$ with three arguments is the symbol (\ref{tri_sym}) of
  the triple operator.
\end{proposition}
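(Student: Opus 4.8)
The plan is to prove Proposition \ref{5_sym_prop} by writing out the kernel of
$\hat T = T_{f_1}M_{f_2}T_{f_3}M_{f_4}T_{f_5}$ as an iterated integral over two
copies of $\Omega$ and recognizing it as an instance of the integral
$J_\alpha(\check\phi,\cdot)$ controlled by Lemma \ref{cphi_lem}. Concretely, one
composes (\ref{tri_ker}) with itself: inserting variables $y$ (between the first
$T$ and $M_{f_4}$) and $w$ (between $M_{f_4}$ and the last $T$), the prefactors
$e^{-\alpha\phi(x,\yb)}$, $e^{-\alpha\phi(y,\zb)}$ (from the first triple
operator, with its output variable $z$ renamed) and the corresponding ones from
the second triple operator combine, together with the overall
$e^{\alpha\phi(x,\zb)}$, into exactly the phase
$e^{-\alpha\check\phi(x,\zb,y,\yb,w,\wb)}$, while the $\mu$-factors collect into
$\tilde\mu^{x\wb y\yb}\tilde\mu^{x\zb w\wb}$ (equivalently
$\tilde\mu^{x\zb y\yb}\tilde\mu^{y\zb w\wb}$ by the two decompositions of
$\check\phi$ in the lemma), and the metric determinants give $g(y,\yb)g(w,\wb)$.
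The upshot is that $\hat T$ has kernel
$\frac{\alpha^N e^{\alpha\phi(x,\zb)}}{\pi^N\mu(x,\zb)}$ times an analytic
symbol, so $\hat T$ is a covariant Töplitz operator; call its symbol
$S_\alpha(f_1,f_2,f_3,f_4,f_5)$.

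Next I would establish the two claimed factorizations in (\ref{5_sym}). Both are
instances of the same idea: fix one of the two integration variables and do the
other integral first. For the identity
$S_\alpha(f_1,f_2,f_3,f_4,f_5)=S_\alpha(f_1,f_2,S_\alpha(f_3,f_4,f_5))$, one
performs the $w$-integral first with $y$ frozen; by Proposition (the one
producing (\ref{tri_sym})) applied to $T_{f_3}M_{f_4}T_{f_5}$, the inner integral
(in the variable $w$) reproduces the kernel of $T_{S_\alpha(f_3,f_4,f_5)}$, and
then the remaining $y$-integral is precisely the triple-operator integral
(\ref{tri_ker}) for $T_{f_1}M_{f_2}T_{S_\alpha(f_3,f_4,f_5)}$. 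The
decomposition $\check\phi(x,\zb,y,\yb,w,\wb)=\tilde\phi(x,\zb,y,\yb)+\tilde\phi(y,\zb,w,\wb)$
from Lemma \ref{cphi_lem} is exactly what makes the phase split correctly for
this order of integration. Symmetrically, using
$\check\phi=\tilde\phi(x,\zb,w,\wb)+\tilde\phi(x,\wb,y,\yb)$ and doing the
$y$-integral first gives the kernel of $T_{S_\alpha(f_1,f_2,f_3)}M_{f_4}T_{f_5}$,
i.e. $S_\alpha(S_\alpha(f_1,f_2,f_3),f_4,f_5)$. Associativity of operator
composition, $T_{f_1}M_{f_2}\big(T_{f_3}M_{f_4}T_{f_5}\big)=\big(T_{f_1}M_{f_2}T_{f_3}\big)M_{f_4}T_{f_5}$,
guarantees the two results agree, so all three expressions coincide as analytic
symbols.

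The main technical obstacle is justifying the interchange of the two integrations
and the order in which the asymptotic expansion is taken — that is, checking that
"do the $w$-integral, get an analytic symbol, then do the $y$-integral" yields
the same asymptotic series as treating the double integral directly. This is
where Lemma \ref{cphi_lem} is essential: it certifies that the full double
integral $J_\alpha(\check\phi,\cdot)$ exists near the diagonal and has an
honest asymptotic expansion in $\frac1\alpha$, so Fubini applies and the
iterated asymptotic expansions are legitimate rearrangements of one and the same
series. A secondary point requiring a little care is bookkeeping of the renamed
variables and the $\mu$- and $g$-factors when composing (\ref{tri_ker}) with
itself, and verifying that the integrand of the inner integral stays within the
neighbourhood of the diagonal where the holomorphic extensions in Definition
\ref{sym_def} are defined; this follows because evaluation at $x=y$, $\zb=\yb$
etc. in the $R_n$'s keeps everything on the diagonal, and the symbols $f_i$ are
principal symbols by hypothesis. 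Once these points are in place, the proposition
follows by assembling the three identifications above.
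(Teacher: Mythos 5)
Your proposal follows essentially the same route as the paper: write the symbol of $\hat T$ as a double integral for each of the two bracketings, observe that the phase and $\mu$-factors combine into $e^{-\alpha\check\phi}$ times the product of $\tilde\mu$'s via the two decompositions of $\check\phi$, invoke Lemma \ref{cphi_lem} for existence and the asymptotic expansion, and identify each iterated integral with a nested triple symbol by comparison with the integral form of $S_\alpha$. Your explicit attention to the interchange of the inner integration with its asymptotic expansion is a point the paper passes over more quickly, but the argument is the same.
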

\begin{proof}
  We write the symbol (\ref{tri_sym}) as integral
  \begin{align} \label{tri_sym_short}
    S_\alpha\paraa{f_3, f_4,f_5}(x,\zb) =  \frac{\alpha^N}{\pi^N} \int  
       \paraa{e^{-\alpha \tilde{\phi}}\tilde{\mu}}^{ x \zb  y \yb} g^{y\yb}
        f_3^{x\yb}f_4^{y\yb}f_5^{y\zb} \, d^{2N}y 
  \end{align} 
   for $(x,\zb)$ in a neighbourhood of the diagonal, which
   follows directly from (\ref{int_exp}) and (\ref{J_int}). 
   To shorten notation, we have written the arguments as upper indices.
 
  The symbol of $\hat{T} = T_{f_1}M_{f_2} \paraa{ T_{f_3} M_{f_4} T_{f_5} }$
  written as double integral is
  \begin{align} \label{trip_ls}
     \frac{\alpha^{2N}}{\pi^{2N}} 
       \int \paraa{e^{-\alpha \tilde{\phi}}\tilde{\mu}}^{ x \zb  y \yb} g^{y\yb}
        f_1^{x\yb}f_2^{y\yb}  
        \parab{ \int  \paraa{e^{-\alpha \tilde{\phi}}\tilde{\mu}}^{ y \zb  w \wb} g^{w\wb}
          f_3^{y\wb}f_4^{w\wb}f_5^{w\zb} \, d^{2N}w   }
     \, d^{2N}y 
  \end{align} 
  The symbol of $\hat{T} = \paraa{ T_{f_1}M_{f_2} T_{f_3} } M_{f_4} T_{f_5}$
  written as double integral is
  \begin{align} \label{trip_rs}
     \frac{\alpha^{2N}}{\pi^{2N}} 
       \int \paraa{e^{-\alpha \tilde{\phi}}\tilde{\mu}}^{ x \zb  w \wb} g^{w\wb}
        \parab{ \int  \paraa{e^{-\alpha \tilde{\phi}}\tilde{\mu}}^{ x \wb  y \yb} g^{y\yb}
          f_1^{x\yb}f_2^{y\yb}f_3^{y\wb} \, d^{2N}y   }
        f_4^{w\wb}f_5^{w\zb}  
     \, d^{2N}w
  \end{align}
  We note that
  \begin{align*} 
      \paraa{e^{-\alpha \tilde{\phi}}\tilde{\mu}}^{ x \zb  y \yb}
      \paraa{e^{-\alpha \tilde{\phi}}\tilde{\mu}}^{ y \zb  w \wb} =
      \paraa{e^{-\alpha \tilde{\phi}}\tilde{\mu}}^{ x \zb  w \wb}
      \paraa{e^{-\alpha \tilde{\phi}}\tilde{\mu}}^{ x \wb  y \yb}=
     \paraa{e^{-\alpha \check{\phi}}}^{ x \zb  y \yb w \wb}
     \tilde{\mu}^{ x \zb  y \yb}  \tilde{\mu}^{ y \zb  w \wb} 
  \end{align*}  
  which follows by inserting the definitions (\ref{dia_fun_phi}), (\ref{dia_fun_mu}) of $\tilde{\phi}$
  and $\tilde{\mu}$. Therefore, both integrals (\ref{trip_ls}) and (\ref{trip_rs}) 
  exist due to Lemma \ref{cphi_lem}, are equal and have an asymptotic
  expansion $\tilde{S}_\alpha$ in a neighbourhood of the diagonal. This asymptotic
  expansion is the analytic symbol of $\hat{T}$.
  (\ref{5_sym}) follows by comparing (\ref{trip_ls}) and (\ref{trip_rs})  with (\ref{tri_sym_short}).
\end{proof}

To arrive at a closed algebra of Töplitz operators, one has to
show that also the triple operator (\ref{trip_prod}) of three
analytic symbol (and not only principal symbols) is a covariant Töplitz
operator. This is done for the case $\tilde{\mu}=1$
in \cite{Cha03, Del21,Hez21,Cha21}, where it turns out that the definition of
an analytic symbol has to be further restricted.

We will however go in another direction and simply define a formal triple symbol
in analogy with (\ref{tri_sym}). Nevertheless, under the assumption that the corresponding
Töplitz operators and asymptotic expansions converge, the formulas stay the
same by simply replacing $\hbar$ with $\frac{1}{\alpha}$.

\section{Formal symbols and their $\star$-products}

\begin{definition}  
  Let $f_1$, $f_2$, $f_3$ be formal symbols.
  The \emph{formal triple symbol} is 
\begin{align} \label{tri_sym}
  S\paraa{f_1, f_2,f_3}(x,\zb)  = \sum_{n,m,p,q=0}^{\infty} \hbar^{n+m+p+q}
       R_n\parab{ f_{1,m}^{x\yb} f_{2,p}^{y\yb}f_{3,q}^{y\zb} \tilde{\mu}^{x\zb y\yb} }(x,\zb)
\end{align}
\end{definition}  
The formal triple symbol is the analytic symbol (\ref{tri_sym})
of the triple operator (\ref{trip_prod}), where the
asymptotic expansion is replaced by the formal power series in $\hbar$.

In \cite{Eng00-2}  it is shown that the coefficients $R_n$ of the expansion (\ref{int_exp}) solely
depend on the Riemannian curvature of  the metric $g_{i\jb}=\partial_i\partial_{\jb}\phi$
of the Kähler potential $\phi$, its contractions
and covariant derivatives thereof. In particular,
\begin{align}
  R_0\paraa{f}(x,\zb)  &=  f(x,\zb,x,\zb) \label{R0_eq} \\
  R_1\paraa{f}(x,\zb)  & = \paraa{\Delta f + \frac{1}{2}R f}(x,\zb,x,\zb) \label{R1_eq}
\end{align} 
for a function $f(x,\zb,y,\yb)$, where $R=g^{\jb i}\partial_i \partial_{\jb}(\ln g)$ is the scalar curvature and
$\Delta=g^{i\jb}\partial_i \partial_{\jb}$ is the Laplace operator. The derivatives
are with respect to $y$ and $\yb$.
In \cite{Eng00-2} also $R_2$ (Theorem 5) and $R_3$ (page 23) are calculated.

Up to first order, the formal triple symbol is explicitly
\begin{align} \label{tri_sym_form}
  S\paraa{f_1, f_2,f_3}  
      & = R_0 \paraa{  f_1^{x\yb} f_2^{y\yb}f_3^{y\zb} \tilde{\mu}^{x\zb y\yb} } 
            +  \hbar R_1 \parab{  f_1^{x\yb} f_2^{y\yb}f_3^{y\zb} \tilde{\mu}^{x\zb y\yb} }
            + \mathcal{O}(\hbar^2) \\
      & = f_1 f_2 f_3\paraa{1 + \hbar (\Delta \mu + \frac{1}{2} R) } \nonumber \\
       &    + \hbar g^{i\jb} \paraa {f_1 f_3 f_{2,i\jb} + f_3 f_{1,\jb}f_{2,i} + f_2 f_{1,\jb}f_{3,i} + f_1 f_{2,\jb}f_{3,i} }
            +  \mathcal{O}(\hbar^2)    \nonumber  
\end{align}
where all functions are evaluated at $(x,\zb)$ and,
we have used that $\tilde{\mu}=1$,
$\tilde{\mu}_{,I}=0$ and 
$\tilde{\mu}_{,i\jb}=\mu_{,i\jb}$
at $(x,\zb,y,\yb)=(x,\zb,x,\zb)$. We have
abbreviated partial derivatives with indices separated by a comma.
(Remember that the formal symbols $f_1$, $f_2$ and $f_3$ 
are additional formal power series in $\hbar$.)

When we replace the asymptotic expansion of 
Proposition \ref{5_sym_prop} with a formal power series, we also
get a generalized associativity law for the formal triple symbol.
\begin{proposition} \label{S_ass_prop}
    Let $f_1$, $f_2$, $f_3$, $f_4$ and $f_5$ be formal symbols. Then
\begin{align} \label{S_ass}
  S\paraa{f_1, f_2,  S\paraa{f_3, f_4,f_5} } = S\paraa{S\paraa{f_1, f_2,f_3}, f_4, f_5 }
\end{align}
\end{proposition}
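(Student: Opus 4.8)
The plan is to reduce the formal identity (\ref{S_ass}) to the convergent identity (\ref{5_sym}) of Proposition \ref{5_sym_prop}, order by order in $\hbar$. The key observation is that both sides of (\ref{S_ass}) are, by the definition (\ref{tri_sym}) of the formal triple symbol, formal power series in $\hbar$ whose coefficients are finite sums of the form $R_n(\cdots)$ applied to products of the $f_{i,m}$ and $\tilde\mu$; nothing analytic is lost in passing from the asymptotic expansion to the formal series, because the $R_n$ are honest differential operators acting on real-analytic functions near the diagonal. So it suffices to show that the coefficient of each $\hbar^M$ agrees on the two sides, and for a fixed $M$ only finitely many terms contribute.

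The first step is to fix, for each $i$, an integer $M_i\ge 0$ and replace $f_i$ by its truncation $f_i^{(M_i)}=\sum_{m=0}^{M_i}\hbar^m f_{i,m}$; since any coefficient of $\hbar^M$ in $S(f_1,f_2,S(f_3,f_4,f_5))$ or $S(S(f_1,f_2,f_3),f_4,f_5)$ depends only on the $f_{i,m}$ with $m\le M$, it is enough to prove (\ref{S_ass}) for symbols with polynomial (hence convergent) $\hbar$-dependence. For such symbols, $f_i^{(M_i)}$ evaluated at a genuine positive $\alpha$ (with $\hbar=1/\alpha$) is an honest analytic symbol of finite order, so Proposition \ref{5_sym_prop} applies and gives the operator identity $T_{f_1}M_{f_2}(T_{f_3}M_{f_4}T_{f_5}) = (T_{f_1}M_{f_2}T_{f_3})M_{f_4}T_{f_5}$, whence the two iterated triple symbols $S_\alpha(f_1,f_2,S_\alpha(f_3,f_4,f_5))$ and $S_\alpha(S_\alpha(f_1,f_2,f_3),f_4,f_5)$ coincide as analytic functions in a neighbourhood of the diagonal, for every $\alpha$ large enough.

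The second step is to upgrade this equality of analytic symbols to an equality of their asymptotic (equivalently, formal) expansions. Both iterated symbols have asymptotic expansions in $1/\alpha$ — this is exactly what Lemma \ref{cphi_lem} together with the expansion (\ref{int_exp}) provides, since by the factorization
\begin{align*}
  \paraa{e^{-\alpha \tilde{\phi}}\tilde{\mu}}^{ x \zb  y \yb}
  \paraa{e^{-\alpha \tilde{\phi}}\tilde{\mu}}^{ y \zb  w \wb}
  = \paraa{e^{-\alpha \check{\phi}}}^{ x \zb  y \yb w \wb}\,
     \tilde{\mu}^{ x \zb  y \yb}\,\tilde{\mu}^{ y \zb  w \wb}
\end{align*}
each iterated symbol is a single integral $J_\alpha(\check\phi,\cdot)$ of the type covered by the stationary-phase expansion. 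Since an asymptotic expansion in $1/\alpha$ is unique, and two functions that agree for all large $\alpha$ have the same expansion, the formal power series obtained by replacing $1/\alpha$ with $\hbar$ agree coefficient by coefficient. Finally, one checks that these formal series are literally the iterated formal triple symbols of (\ref{tri_sym}): expanding $S(f_3,f_4,f_5)$ as a formal symbol and feeding it into $S(f_1,f_2,-)$ reproduces, after collecting powers of $\hbar$, exactly the double-$R_n$ sum coming from the double integral, because the $R_n$ are linear and act only in the integration variables. This matching is the one genuinely bookkeeping-heavy step, but it is purely formal: no analysis is involved, only the observation that composition of the defining sums mirrors composition of the integrals.

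The main obstacle is not any single hard estimate — Lemma \ref{cphi_lem} and Proposition \ref{5_sym_prop} do the analytic work — but rather making precise the claim that ``the formal triple symbol is the formal expansion of the analytic triple symbol,'' i.e. that truncating in $\hbar$ and then expanding in $1/\alpha$ commutes with expanding first and truncating. I would handle this by noting that for each fixed total order $M$ the coefficient of $\hbar^M$ on either side of (\ref{S_ass}) is a universal polynomial differential expression in finitely many jets of $f_1,\dots,f_5,\phi,\mu$ at the diagonal, and that this same universal expression is computed by the finite-order analytic symbols for every large $\alpha$; equality of the analytic symbols then forces equality of these universal expressions. Once that principle is stated cleanly, (\ref{S_ass}) follows.
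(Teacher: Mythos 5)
Your proposal is correct and follows essentially the same route as the paper: derive the identity for principal symbols from Proposition \ref{5_sym_prop} (whose analytic content is Lemma \ref{cphi_lem} and the factorization of the phases), pass to the formal identity by uniqueness of the asymptotic expansion in $1/\alpha$, and extend to general formal symbols by a purely formal rearrangement. The only cosmetic difference is that you perform this last extension by truncating and applying Proposition \ref{5_sym_prop} to finite-order analytic symbols (strictly speaking it is stated only for principal symbols, though multilinearity of the triple operator covers this), whereas the paper gets the same extension directly by exchanging the sums in $\hbar$, i.e.\ by multilinearity of $S$ in its three slots.
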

\begin{proof}
  When $f_1$, $f_2$, $f_3$, $f_4$ and $f_5$ are primary symbols, 
  Proposition \ref{5_sym_prop} tells that (\ref{5_sym})
  are equal asymptotic expansions, in which we
  can consider every order in $\frac{1}{\alpha}$ and
  replace $\frac{1}{\alpha}$ with $\hbar$.
  It follows that (\ref{S_ass}) is true for primary symbols.
  The general case follows by replacing the primary symbols
  with formal symbols, since for formal power series, the sums in $\hbar$ can be exchanged.
\end{proof}

The rest of this section is devoted to proof Theorem \ref{main_theo}.

\begin{proof}

The Berezin-Töplitz $\star$-product is defined by
$f\star h = S(f,1,h)$ for two formal symbols $f$ and $h$.
Equation (\ref{ber_top_exp}) follows by inserting this
definition into (\ref{tri_sym}). Equation (\ref{ber_top_star_h2})
can be directly derived from (\ref{tri_sym_form}).

The Berezin-Töplitz-$\star$-product (\ref{ber_top_exp}) is an associative product,
since by Proposition \ref{S_ass_prop}
\begin{align}
  f_1 \star (f_2\star f_3) = S\paraa{f_1, 1,  S\paraa{f_2, 1,f_3} }
      = S\paraa{S\paraa{f_1, 1,f_2}, 1, f_3 } = (f_1 \star f_2 ) \star f_3
\end{align}
for three formal symbols $f_1$, $f_2$ and $f_3$.

When $k$ is the unit of the Berezin-Töplitz $\star$-product,
it follows that $k \star 1=1$ and a linear recurrence relation for $k$ can be derived. 
\begin{align*}
  1  & = \sum_{n,m=0}^{\infty} \hbar^{n+m}  R_n \paraa{ k_m^{ x \yb}  \tilde{\mu}^{x\xb y\yb} } 
      = \sum_{n=0}^{\infty} \hbar^n \sum_{m=0}^n  R_m \paraa{ k_{n-m}^{x \yb}  \tilde{\mu}^{x\xb y\yb} }  \\
     & = k_0 +  \sum_{n=1}^{\infty} \hbar^n \parab{ k_{n}
              +   \sum_{m=1}^{n}  R_m \paraa{ k_{n-m}^{x \yb}  \tilde{\mu}^{x \xb y\yb} }  }
\end{align*}
( see \cite{Cha03}, \cite{Hez21})
Thus,
\begin{align*} 
   k_0 = 1, \qquad k_n  = - \sum_{m=1}^{n} R_m \paraa{ k_{n-m}^{x \yb}  \tilde{\mu}^{x\xb y\yb} }
\end{align*}
Therefore, the formal symbol $k$ of the Bergman kernel exists and can be
calculated by solving the recurrrence relation order by order.
In particular, by applying (\ref{R0_eq}) and (\ref{R1_eq}) equation (\ref{ker_sym_h2}) follows.

As a further option, one can consider $k\star k = k$
from which a quadratic recurrence relation can be derived (see \cite{Eng00-2}).
\begin{align*}
  \sum_{n=0}^{\infty} \frac{1}{\alpha^n}k_n 
    & = \sum_{n,m,p=0}^{\infty} \frac{1}{\alpha^{n+m+p} } R_n \paraa{ k_m^{\yb} k_n^y \tilde{\mu}^{y\yb} } 
      = \sum_{n=0}^{\infty} \frac{1}{\alpha^n} \sum_{m=0}^n  R_m \paraa{ \sum_{p=0}^{n-m} k_p k_{n-p}  \tilde{\mu}^{y\yb} }  \\
    & = k_0^2 + \sum_{n=1}^{\infty} \frac{1}{\alpha^n} \parab{
      2 k_0 k_{n}  + \sum_{p=1}^{n-1} k_p k_{n-p}   
     +  \sum_{m=1}^n  R_m \paraa{ \sum_{p=0}^{n-m} k_p k_{n-p}  \tilde{\mu}^{y\yb} }    }
\end{align*}
Thus,
\begin{align*} 
   k_0 = 1, \qquad
   k_n  = - \sum_{p=1}^{n-1} k_p k_{n-p}  -  \sum_{m=1}^n  R_m \paraa{ \sum_{p=0}^{n-m} k_p k_{n-p}  \tilde{\mu}^{y\yb} }
\end{align*}

Using the definition (\ref{tri_sym}) of the triple symbol $S$, 
the Berezin transform ( \ref{ber_trans}) is
\begin{align}
   \psi(f)(x,\xb) =  \sum_{n=0}^{\infty} \hbar^n R_n\paraa{ k^{x \yb}f^{y\yb} k^{y\xb} \tilde{\mu}^{x\xb y\yb} }(x,\xb) 
\end{align}
Expanding $k$ results in
\begin{align*}  
   \psi(f) & = \sum_{n,m,p=0}^{\infty} \hbar^{n+m+p}  R_n \paraa{ k_m^{x \yb} f^{y\yb} k_p^{y\xb}  \tilde{\mu}^{x\xb y\yb} }
\end{align*}
and
\begin{align*}  
  \psi(f)_0 & = f \\
  \psi(f)_1 & = 2 k_1  f + R_1 \paraa{ (f^{y\yb} \tilde{\mu}^{x\xb y\yb} } \\
      & = -  2 \paraa{\Delta \mu + \frac{1}{2} R}  f  + \Delta f + f \Delta \mu + \frac{1}{2} f R \\
      & = \Delta f  -  f \paraa{\Delta \mu + \frac{1}{2} R}
\end{align*}
In summary,
\begin{align}  \label{ber_trans_h2}
  \psi(f) = f +\hbar \paraa{ \Delta f  -  f \paraa{\Delta \mu + \frac{1}{2} R} } +  \mathcal{O}(\hbar^2)    
\end{align}

To calculate $\psi^{-1}(f)$ assume that
\begin{align*}  
   \psi^{-1}(f) = \sum_n \hbar^n g_n
\end{align*}  
and therefore
\begin{align*}  
   f  = \sum_{n,m,p,q=0}^{\infty} \hbar^{n+m+p+q} R_n \paraa{ k_m^{x\yb} k_p^{y\xb} g_q^{y\yb} \tilde{\mu}^{x\xb y\yb} } 
       = \sum_{n=0}^{\infty} \hbar^n \sum_{l+m+p+q=n}  R_l \paraa{ k_m^{x\yb} k_p^{y\xb} (g_q^{y\yb} \tilde{\mu})^{ x\xb y\yb} }  
\end{align*}
which results in a recurrence relation for the $g_n$
\begin{align*}  
     g_0 = f, \qquad g_n  = - \sum_{r=0}^{n-1} \sum_{m+p+q=n-r}  R_m \paraa{ k_p^{x\yb} k_q^{y\xb} (g_{r}^{y\yb} \tilde{\mu})^{x\xb y\yb} }  
\end{align*}
The second relation is
\begin{align*}  
     g_1 & =  -  2 k_1 g_0  - R_1 \paraa{ (g_0^{y\yb} \tilde{\mu})^{x\xb y\yb} } \\
      & =   2 (\Delta \mu + \frac{1}{2} R) f  - \paraa{ \Delta f +f\Delta\mu  + \frac{1}{2} f_0  R  } \\
      & =  - \Delta f  + f (\Delta \mu + \frac{1}{2} R) 
\end{align*}
In summary,
\begin{align}  \label{inv_ber_trans_h2}
   \psi^{-1}(f) = f - \hbar \paraa{ \Delta f  -  f \paraa{\Delta \mu + \frac{1}{2} R} } +  \mathcal{O}(\hbar^2)    
\end{align}

To determine the contravariant $\star$-product (\ref{con_star})
we calculate up to first order
\begin{align*} 
 \psi(f) \star \psi(h) = fh + \hbar( h \Delta f + f \Delta h + g^{i\jb} f_{,\jb} h_{,i} -  fh \paraa{\Delta \mu + \frac{1}{2} R})
     +  \mathcal{O}(\hbar^2)    
\end{align*}
where we have used (\ref{ber_top_star_h2}) and (\ref{ber_trans_h2}).
(\ref{con_star_h2}) follows by additionally applying (\ref{inv_ber_trans_h2}).

For the covariant $\star$-product (\ref{cov_star}), one determines the formal
inverse of $k$ and applies this to the product (\ref{ber_top_star_h2}) of $kf$ and
$kh$. This results in (\ref{cov_star_h2}).
\end{proof}


\begin{thebibliography}{100}  

\bibitem{Bar20} E. Barletta, S. Dragomir, F. Esposito, \emph{Weighted Bergman Kernels and Mathematical Physics},
Axioms 2020, 9, 48. https://doi.org/10.3390/axioms9020048 

\bibitem{Bon20} J. Bonyo, \emph{Reproducing kernels for Hardy and Bergman spaces of the upper half plane},
Communications in Advanced Mathematical Sciences 3 (1), 13-23, 2020
 
\bibitem{Boa97} H. Boas, S. Fu, E. Straube, \emph{The Bergman kernel function: explicit formulas and zeroes}, 
 	arXiv:math/9706202, https://arxiv.org/abs/math/9706202

\bibitem{Den21} G. Deng, Y. Huang, Y. Qian, \emph{Reproducing Kernels of Some Weighted Bergman Spaces},
 J Geom Anal 31, 9527–9550 (2021)
 
\bibitem{Bor94} M. Bordemann, E. Meinrenken, M. Schlichenmaier, \emph{Toeplitz quantization of Kähler manifolds and $gl(N), N\rightarrow \infty$ limits},
Commun.Math. Phys. 165, 281–296 (1994)
 
\bibitem{Cha03} L. Charles., \emph{Berezin-Toeplitz operators, a semi-classical approach},
Comm. Math. Phys. 239, (2003), no. 1-2, p. 1-28

\bibitem{Cha21} L. Charles., \emph{Analytic Berezin-Toeplitz operators},
Mathematische Zeitschrift 299(2), 1-21 (October 2021)

\bibitem{Del21} Deleporte, A. \emph{Toeplitz Operators with Analytic Symbols}
 J Geom Anal 31, 3915–3967 (2021), arXiv:1812.07202

\bibitem{Hez21} H. Hezari, H., H. Xu ,\emph{On a property of Bergman kernels when the Kähler potential is analytic}, 
Pacific Journal of Mathematics, Vol. 313, No. 2, 2021

\bibitem{Eng-??} M. Englis, \emph{Some aspects of Bergman kernels}, https://users.math.cas.cz/~englis/hayama.pdf

\bibitem{Eng16} M. Englis, \emph{An Excursion into Berezin–Toeplitz Quantization and Related Topics}
In: Bahns, D., Bauer, W., Witt, I. (eds) Quantization, PDEs, and Geometry. Operator Theory: Advances and Applications, vol 251.
Birkhäuser, 2016

\bibitem{Eng00-1} M. Englis, \emph{A Forelli-Rudin Construction and Asymptotics of Weighted Bergman Kernels}, 
Journal of Functional Analysis 177, 257-281 (2000)

\bibitem{Eng00-2} M. Englis, \emph{The asymptotics of a Laplace integral on a Kähler manifold},
J. reine angew. Math. 528 (2000), 1-39

\bibitem{Tia90} G. Tian, \emph{On a set of polarized Kähler metrics on algebraic manifolds},
 J. Diﬀerential Geom. 32 (No. 1) 99–130 (1990)

\bibitem{Zel98} S. Zelditch, \emph{Szegö kernels and a theorem of Tian},
 Internat. Math. Res. Notices (No. 6) 317–331 (1998)

\bibitem{Cat99} D. Catlin, \emph{The Bergman kernel and a theorem of Tian},
in "Analysis and geometry in several complex variables (Katata, 1997)", Trends Math. p. 1–23, Birkhäuser Boston, (1999)

\bibitem{Lu00}  Z. Lu, \emph{On the lower order terms of the asymptotic expansion of Tian-Yau-Zelditch}, Amer. J. Math. 122 (No. 2) 235–273 (2000).

\bibitem{Dou10} M. Douglas, S. Klevtsov, \emph{Bergman Kernel from Path Integral},
 Commun. Math. Phys. 293, 205–230 (2010)
 
\bibitem{Kle09} S. Klevtsov, \emph{Bergman kernel from the lowest Landau level}, 
Nuclear Physics B (Proc. Suppl.) 192–193 (2009) 154–155
 
\bibitem{Ber74} F. A. Berezin, \emph{Quantization}, Mathematics of the USSR-Izvestiya, 1974, Volume 8, Issue 5, 1109–1165
 
\end{thebibliography}

\end{document}